\newtheorem{theo}{Theorem}[section]
\newtheorem{thm}[theo]{Theorem}
\newtheorem{lem}[theo]{Lemma}
\newtheorem{proposition}[theo]{Proposition}
\newtheorem*{thmM}{Main Theorem}
\newtheorem*{thmS}{The Sharkovsky Theorem}
\theoremstyle{definition}
\newtheorem{dfn}[theo]{Definition}
\theoremstyle{remark}
\newtheorem{remark}[theo]{Remark}
\numberwithin{equation}{section}
\def\N{\mathbb{N}}
\newcommand{\nd}{\mathcal{ND}}
\newcommand{\nbs}{\mathcal{NBS}}
\newcommand{\n}{\mathcal{N}}
\newcommand{\orp}{\mathrm{orp}}
\newcommand{\ORP}{\mathrm{ORP}}
\newcommand{\Ovr}{\mathrm{Ovr}}
\newcommand{\Sh}{\operatorname{Sh}}
\newcommand{\Per}{\operatorname{Per}}
\newcommand{\sha}{\succ\mkern-14mu_s\;}
\renewcommand\le{\leqslant}
\renewcommand\ge{\geqslant}
\renewcommand{\rho}{\varrho}
\begin{document}

\noindent                                             
\begin{picture}(150,36)                               
\put(5,20){\tiny{Submitted to}}                       
\put(5,7){\textbf{Topology Proceedings}}              
\put(0,0){\framebox(140,34){}}                        
\put(2,2){\framebox(136,30){}}                        
\end{picture}                                        
\vspace{0.5in}

\renewcommand{\bf}{\bfseries}
\renewcommand{\sc}{\scshape}
\vspace{0.5in}

\date{June 23, 2018; revised August 22, 2018}

\title{Forcing among patterns with no block structure}

\author[A.~Blokh]{Alexander~Blokh}

\address[Alexander~Blokh]
{Department of Mathematics\\ University of Alabama at Birmingham\\
Birmingham, AL 35294\\ USA}
\email{ablokh@math.uab.edu}

\author[M.~Misiurewicz]{Micha{\l}~Misiurewicz}

\address[Micha{\l}~Misiurewicz]
{Department of Mathematical Sciences\\ Indiana University-Pur\-due
University Indianapolis\\ 402 N. Bla\-ck\-ford Street\\
Indianapolis, IN 46202\\ USA}
\email{mmisiure@math.iupui.edu}

\subjclass[2010]{Primary 37E15; Secondary 37E05, 37E45}

\keywords{Sharkovsky order; forcing relation; cyclic patterns}

\dedicatory{Dedicated to the memory of our colleague and dear friend
Sergiy Kolyada}

\thanks{Research of Micha{\l} Misiurewicz was partially
supported by grant number 426602 from the Simons Foundation.}

\thanks{Research by Alexander Blokh was partially
supported by NSF grant DMS--1201450}

\thanks{Both authors would like to thank the organizers of the 52nd
  Spring Topology and Dynamical Systems Conference at Auburn
  University, during which this paper was discussed.}

\begin{abstract}
Define the following order among all natural numbers except for 2 and
1:
\[
4\gg 6\gg 3\gg \dots \gg 4n\gg 4n+2\gg 2n+1\gg 4n+4\gg\dots
\]
Let $f$ be a continuous interval map. We show that if $m\gg s$ and $f$
has a cycle with no division (no block structure) of period $m$ then
$f$ has also a cycle with no division (no block structure) of period
$s$. We describe possible sets of periods of cycles of $f$ with no
division and no block structure.
\end{abstract}

\maketitle

\section{Introduction and statement of the results}\label{s:intro}

The simplest type of limit behavior of a trajectory is periodic;
studying \emph{periodic orbits} (\emph{cycles}) is one of the central
topics in one-dimension\-al dynamics. To some extent this can be
explained by a remarkable result, the Sharkovsky Theorem, proved by
A.~N.~Sharkovsky in the 1960s (see \cite{sha64} and~\cite{shatr} for
its English translation). To state it, let us first recall the
\emph{Sharkovsky order} of the set $\N$ of positive integers:
\begin{multline*}
3\sha 5\sha 7\sha\dots\sha 2\cdot3\sha 2\cdot5\sha 2\cdot7\sha\\ \dots
\sha 2^2\cdot3\sha 2^2\cdot5\sha 2^2\cdot7\sha\dots\sha 2^2\sha 2\sha 1.
\end{multline*}
Denote by $\Sh(k)$ the set of all integers $m$ such that $k\sha m$ or
$m=k$, and by $\Sh(2^\infty)$ the set $\{1,2,4,8,\dots\}$; denote by
$\Per(f)$ the set of periods of cycles of a map $f$ (by
the \emph{period} we mean the \emph{minimal} period). Below $I$ always
denotes a closed interval.

\begin{thmS}
If $g:I\to I$ is continuous, $m\sha n$ and $m\in\Per(g)$ then
$n\in\Per(g)$ and there exists $k\in\N\cup\{2^\infty\}$ with
$\Per(g)=\Sh(k)$. Conversely, if $k\in\N\cup\{2^\infty\}$ then there
exists a continuous map $f:I\to I$ with $\Per(f) =\Sh(k)$.
\end{thmS}

The Sharkovsky Theorem is important, in particular, because it
introduces a concept of \emph{forcing relation}: it states that if
$m\sha n$ then the fact that an interval map has a cycle of period $m$
\emph{forces} the presence of a cycle of period $n$.
Thus, it shows how various ``types'' of cycles (here by
``type'' one means ``period'') force each other. Another interpretation
of the Sharkovsky Theorem is that it fully describes all possible sets
of periods of cycles of interval maps. This leads to similar problems:
(a)~how the existence of cycles of certain types forces the existence
of cycles of certain other types, and (b) what possible sets of types
of cycles an interval map may have.

For example, given a cycle $P=\{x_1<x_2<\dots <x_n\}$ of an interval
map $f$, associate with it the (cyclic) permutation $\pi$ defined by
$f(x_i)=x_{\pi(i)}$, $i=1, 2, \dots, n$. Think of $\pi$ as the type of
$P$. The family of all cycles associated to $\pi$ is called an
\emph{oriented pattern} (see~\cite{alm}). If we identify oriented
patterns obtained from each other by a flip,
we get \emph{patterns} (we denote patterns with capital letters
$A,B,\dots$). Similar to the Sharkovsky Theorem, one can ask for an
interval map $f$ (a) how cycles of certain patterns force cycles of
other patterns, and (b) what possible sets of patterns of cycles $f$
may have.

A useful way of studying patterns is by decomposing them.

\begin{dfn}[Block structure]\label{d:no-bs}
Let $\pi$ be a permutation of the set $X=\{1, \dots, n\}$. Suppose that
for some $k>1$ and $m>1$ we have $n=km$ and the permutation $\pi$ maps
sets $Y_1=\{1, \dots, m\},$ $Y_2=\{m+1, \dots, 2m\},$ $\dots$,
$Y_k=\{n-m+1, \dots, n\}$ to one another. Then sets $Y_1, \dots, Y_k$
are called \emph{blocks} and the permutation $\pi$ is said to have
\emph{block structure}; if blocks are two-point sets, $\pi$ is said to
be a \emph{doubling}. As always, similar terminology is used for
patterns and cycles. Otherwise a permutation (a pattern, a cycle) is
said to have \emph{no block structure}.
\end{dfn}

The appropriate power of the map on a block can be viewed as a kind of
renormalization of a pattern; patterns with block structure admit a
renormalization like that. Consider an important particular case.

\begin{dfn}[No division]\label{d:no-div}
Let $\pi$ be a permutation of the set $\{1, \dots, 2m\}$ such that $\pi(i)\ge
m+1$ for each $i, 1\le i\le m$ (and, hence, $\pi(i)\le m$ for each
$i\ge m+1$). Then we say that $\pi$ (and the corresponding pattern and
cycles) \emph{has division}. Otherwise $\pi$ (and the corresponding
pattern and cycles) is said to have \emph{no division}.
\end{dfn}

Observe that a pattern of period 2 has no block structure but has a
division. Therefore we will treat period 2 separately.

Consider the family  $\nbs$ of patterns with no block structure and the
family $\nd$ of all patterns with no division. A pattern \emph{with}
block structure can be studied in two steps: study the factor-pattern
obtained if each block is collapsed to a point while the order among
blocks is kept, and then study the restriction of the pattern on
blocks. On the other hand, no division patterns constitute the ``core''
in the Sharkovsky Theorem. Thus, both patterns from $\nbs$ and $\nd$ are
important. To get uniformity, we consider only patterns of periods
larger than 2; patterns of periods 1 and 2 are discussed after the Main
Theorem.

Define the following order among all natural numbers larger than 2:
\begin{equation}\label{star}
4\gg 6\gg 3\gg \dots \gg 4n\gg 4n+2\gg 2n+1\gg 4n+4\gg\dots\tag{$*$}
\end{equation}
We get it by writing  even numbers in the natural order and
inserting odd numbers $n$ after $2n$. We regard $\gg$ as a strict
ordering (that is, it is not reflexive).

Let $\n_r$ be the set of all integers $s$ with $r\gg s$ and $r$ itself.
Given an interval map $f$, let $ND(f)$ be the set of periods (larger
than 2) of all $f$-cycles with no division, and let $NBS(f)$ be the set
of periods (larger than 2) of all $f$-cycles with no block structure.

\begin{thmM}
Let $f$ be a continuous interval map. If $m\gg s$ and
$f$ has a cycle with no division (no block structure) of period $m$
then $f$ has also a cycle with no division (no block structure) of
period $s$. The following are the only possible cases, and all of them
occur.
\begin{enumerate}
\item $ND(f)=NBS(f)=\emptyset$.
\item $ND(f)=NBS(f)=\n_r$, $r\ge 3$.
\item $ND(f)=\n_{4n+2}, NBS(f)=\n_{2n+1}$, $n\ge 1$.
\end{enumerate}
\end{thmM}

Complementing this theorem, we get additional information about the
structure of cycles if $NBS(f)=\n_{2n+1}$ (see
Proposition~\ref{stefan-only} and Remark~\ref{so}).

\begin{remark}\label{onetwo}
Consider patterns of period 1 and 2. A continuous interval map always
has a fixed point, so 1 should stand at the end of the
order~\eqref{star} both for both types of patterns. The situation with
2 is more complicated. Namely, there is only one pattern of period 2,
and it has a division, but not a block structure. Thus, for no division
patterns, 2 does not occur in the order. For no block structure
patterns, 2 should stand just before 1 because, by the Sharkovsky
Theorem, if $f$ has a cycle of period larger than 1, it has also a
cycle of period 2.
\end{remark}

\begin{remark}\label{refrem}
In view of the second part of Main Theorem, the first part can be
stated in a slightly stronger fashion. Namely, let $f$ be a continuous
interval map and $m,s\ge 3$. Suppose that $f$ has a cycle of period
$m$ with \emph{no division} and either (i)~$m\gg s$, or (ii)~$m=s$ and $m$ is
not of the form $4n+2$. Then $f$ has a cycle of period $s$ with
\emph{no block structure}.
\end{remark}

\begin{remark}\label{triod}
The order~\eqref{star} is
similar to the orders present for the continuous triod map
(see~\cite{almy}) and given by $5,8,4,11,$ $14,7,\dots$ and
$7,10,5,13,16,8,\dots.$ This makes interesting connections and allows
us to look at an interval as a ``diod.''
\end{remark}

\begin{remark}\label{minor}
In~\cite{mis94} it was proved that (a) patterns from $\nbs$ of period
$2n+1$ force patterns from $\nbs$ of period $4n+4$, and (b) patterns
from $\nbs$ of period $4n$ force patterns from $\nbs$ of period $4n+2$.
However our proofs here
are much simpler (because they use the \emph{rotation theory}). The
fact that patterns with no block structure of period $4n+2$ force
patterns with no block structure of period $2n+1$ is new; the order
~\eqref{star} was mentioned in~\cite{mis94} only for unimodal maps.
Finally, in our Main Theorem we take into account not only patterns
with no block structure but also patterns with no division.
\end{remark}

\noindent \textbf{Acknowledgments.} The authors are indebted to the referee for
useful remarks.

\section{Preliminaries}\label{s:prel}

We will be using standard tools of combinatorial dynamics. The reader
that is not acquainted with them can find details for instance
in~\cite{alm}, \cite{bc} or~\cite{mini}.

In particular, we will consider forcing among patterns. It is a
partial ordering on patterns (see~\cite{bal87}). Given a
pattern $A$ we will often consider a cycle $P$ and the $P$-linear
(``connect the dots'') map $f$. Patterns forced by $A$ are then
exactly the patterns of cycles of $f$. They can be found by looking at
the Markov graph of $(f,P)$, where vertices are the $P$-basic
intervals (intervals between consecutive points of $P$) and arrows
correspond to $f$-covering (there is an arrow from $J$ to $K$ if
$K\subset f(J)$). The loops in this graph correspond to cycles of $f$
(and, hence, they determine which patterns are forced by $A$).

We will also use extensively rotation theory for interval maps. Since
it is less known, we will present its basic notions and results
(see~\cite{blo95, bm97, bm97a, bm99, bs13}). We also prove some simple
lemmas that will be necessary later.

Let $f:I\to I$ be a continuous map with a cycle $P$ of period $q>1$. Let $m$
be the number of points $x\in P$ with $f(x)-x$ and $f^2(x)-f(x)$ of
different signs. Then the pair $(m/2,q)$ is called the
\emph{over-rotation pair} of $P$ and is denoted by $\orp(P)$; the
number $m/(2q)$ is called the \emph{over-rotation number} of the cycle
$P$ and is denoted by $\rho(P)$. The set of the over-rotation pairs of
all cycles of $f$ is denoted by $\ORP(f)$. Note that the number $m$
above is even, positive, and does not exceed $q/2$. Therefore in an
over-rotation pair $(p,q)$ both $p$ and $q$ are integers and $0<p/q\le
1/2$. We call over-rotation pairs $(p, q)$
\emph{coprime} if $p$ and $q$ are coprime. Clearly, we can speak
of over-rotation pairs and over-rotation numbers of patterns and
permutations.

\begin{dfn}\label{d:orp-order}
We write $(p,q)\gtrdot (r,s)$ if $p/q<r/s$, or $p/q=r/s=m/n$ with $m$
and $n$ coprime and $p/m\sha r/m$ (clearly, $p/m,r/m\in\mathbb N$).
\end{dfn}

The next lemma relates the fact that a pattern has a block structure to
the properties of the pattern's over-rotation pair.

\begin{lem}\label{bsdiv}
If a cycle $P$ with the over-rotation pair $(k,m)$ has block structure
with $q$ points in every block, then $q$ divides both $k$ and $m$. In
particular, if $k$ and $m$ are coprime then $P$ has no block structure.
\end{lem}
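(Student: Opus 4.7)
The plan is to count sign changes block-by-block. Write $P=\{x_1<x_2<\dots<x_m\}$ with blocks $Y_1=\{x_1,\dots,x_q\}$, $Y_2=\{x_{q+1},\dots,x_{2q}\}$, \dots, $Y_r=\{x_{m-q+1},\dots,x_m\}$, where $r=m/q$. Divisibility $q\mid m$ is then automatic from $m=qr$, so the whole game is to prove $q\mid k$. Since the cycle $P$ has period $m=qr$, the induced permutation $\sigma$ of the blocks is a cycle of length $r$; in particular $r\ge 2$ (the definition of block structure forces more than one block).

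The key geometric observation I would exploit is this: because $r\ge 2$, each block $Y_i$ is mapped by $f$ into a different block $Y_{\sigma(i)}$, and the blocks $Y_i,Y_{\sigma(i)}$ are disjoint intervals of $\R$. Consequently, for every $y\in Y_i$ the sign of $f(y)-y$ depends only on the relative position of the two blocks, not on the particular point $y$. Denote this common sign by $\eta_i\in\{+,-\}$. Thus along the orbit the sequence of signs $\operatorname{sign}(f^{j+1}(x)-f^j(x))$ is simply $\eta_{\sigma^j(i_0)}$, where $i_0$ is the block containing $x$.

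Now I would count sign changes along $P$. A point $x\in Y_i$ contributes a sign change exactly when $\eta_i\ne\eta_{\sigma(i)}$. Since each block contains exactly $q$ points and the contribution depends only on the block,
\[
2k \;=\; \#\{x\in P:\operatorname{sign}(f(x)-x)\ne\operatorname{sign}(f^2(x)-f(x))\} \;=\; q\cdot\#\{i:\eta_i\ne\eta_{\sigma(i)}\}.
\]
The right-hand cardinality is exactly the sign-change count of the factor pattern obtained by collapsing each block to a point, hence equals $2k'$ where $(k',r)$ is the over-rotation pair of that factor. Therefore $k=qk'$, giving $q\mid k$. The final sentence of the lemma is immediate: any block structure with $q\ge 2$ points per block would force $q\mid\gcd(k,m)$, so coprimality of $k$ and $m$ rules out all block structures.

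I do not expect a serious obstacle; the only point that needs a careful sentence is the claim that the sign $\eta_i$ is well defined, which relies precisely on $r\ge 2$ so that $Y_i$ and $Y_{\sigma(i)}$ are distinct (hence disjoint) intervals. Everything else is bookkeeping with the definition of the over-rotation pair.
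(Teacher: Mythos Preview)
Your proof is correct and follows essentially the same route as the paper: collapse each block to a point to obtain a factor cycle $Q$ of period $r=m/q$, and compare the over-rotation data of $P$ with that of $Q$ to conclude $k=qk'$. The paper simply asserts that the over-rotation number is preserved under this collapse and then solves $k'/m'=k/m$ with $m'=m/q$; you supply the missing justification by observing that the sign of $f(y)-y$ is constant on each block (using $r\ge 2$), which is exactly what makes the block-by-block count go through.
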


\begin{proof}
Clearly, $q$ divides $m$. To see that $q$ divides $k$, observe that if
we identify each block to a point to get a cycle $Q$ of period $m/q$,
the over-rotation number of $Q$ will be the same as for $P$, i.e.,
$k/m$. If $\orp(Q)=(k',m')$, then $k'/m'=k/m$ and $m'=m/q$, so $k=k'q$.
However, $k'$ is an integer, so $q$ divides $k$.
\end{proof}

\begin{dfn}\label{d:ovr}
Let $\mathbb M$ be the set consisting of 0, all irrational numbers
between 0 and $1/2$, and all pairs $(\alpha,n)$, where $\alpha$ is a
rational number from $(0,1/2]$ and $n\in\mathbb N\cup\{2^\infty\}$.
Then for $\eta\in\mathbb M$ the set $\Ovr(\eta)$ is equal to the
following.
\begin{enumerate}
\item If $\eta$ is an irrational number or 0, then $\Ovr(\eta)$ is the
  set of all integer pairs $(p,q)$ with $\eta<p/q\le 1/2$.
\item If $\eta=(r/s,n)$ with r,s coprime, then $\Ovr(\eta)$ is the
  union of the set of all integer pairs $(p,q)$ with $r/s<p/q\le 1/2$
  and the set of all integer pairs $(mr,ms)$ with $m\in \Sh(n)$.
\end{enumerate}
\end{dfn}

In case (2) of Definition~\ref{d:ovr} if $n\ne 2^\infty$ then
$\Ovr(\eta)$ is the set of all over-rotation pairs $(p,q)$ with
$(nr,ns)\gtrdot(p,q)$, plus $(nr,ns)$ itself.

\begin{thm}[Theorem 3.1 of \cite{bm97}]\label{t:new-order}
If $f:[0,1]\to [0,1]$ is continuous, $(p,q)\gtrdot(r,s),$ and
$(p,q)\in\ORP(f),$ then $(r,s)\in\ORP(f)$. Thus, $\ORP(f)=\Ovr(\eta)$
for some $\eta\in\mathbb M$. Conversely, if $\eta\in\mathbb M$ then
there exists a continuous map $f:[0,1]\to [0,1]$ such that
$\ORP(f)=\Ovr(\eta)$.
\end{thm}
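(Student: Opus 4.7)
My plan is to prove the theorem in two parts: the forcing direction (the first sentence), from which the classification $\ORP(f)=\Ovr(\eta)$ for some $\eta\in\mathbb M$ follows by a straightforward analysis of $\gtrdot$-downward-closed subsets of the space of over-rotation pairs; and the realization direction, in which one must construct, for each $\eta$, a map attaining $\ORP(f)=\Ovr(\eta)$.

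\textbf{Forcing.} Given a cycle $P$ of $f$ with $\orp(P)=(p,q)$, I would pass to the $P$-linear (``connect the dots'') map $f_P$ and work in its Markov graph, since every pattern forced by the pattern of $P$ appears among the cycles of $f_P$. The key observation is that the over-rotation pair of any cycle of $f_P$ can be read off directly from the corresponding loop in the Markov graph: its length is the denominator, while (half of) the number of ``turning'' steps (those on which the sign of $f_P-\mathrm{id}$ changes) is the numerator. The forcing statement then becomes a purely combinatorial claim about loops. I would split according to Definition~\ref{d:orp-order}. If $p/q<r/s$, I would argue that the loop realizing $(p,q)$ forces enough structure to interpolate to any strictly larger ratio, building a loop of length $s$ with exactly $2r$ turning steps. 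If $p/q=r/s=m/n$ with $(m,n)$ coprime and $p/m \sha r/m$, the problem reduces, via an appropriate power of $f_P$ and passage from $P$ to a coprime over-rotation pair, to Sharkovsky-type forcing at the level of the ``multiplicity'' parameter; Lemma~\ref{bsdiv} provides the bridge, since it identifies the structural role of the common factor between the coordinates of an over-rotation pair.

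\textbf{Realization.} For each $\eta\in\mathbb M$ I would construct $f:[0,1]\to[0,1]$ with $\ORP(f)=\Ovr(\eta)$ using ``over-twist'' building blocks, canonical patterns of coprime over-rotation pair $(m,n)$ that force only patterns admissible by the forcing direction. For $\eta=(r/s,k)$ with finite $k\ne 2^\infty$, I would realize the extremal pair $(kr,ks)$ by a $k$-fold Sharkovsky-style doubling of an over-twist pattern for $(r,s)$, embedded in a truncated interval map so that no cycles of smaller over-rotation number appear. For $\eta=(r/s,2^\infty)$ I would take a suitable limit of the finite-$k$ constructions, and for $\eta=0$ or irrational $\eta$ I would use a density/limit argument producing a map with cycles of every over-rotation pair strictly above $\eta$ and none at or below. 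The forcing direction then automatically ensures $\ORP(f)$ is closed under $\gtrdot$, so it equals $\Ovr(\eta)$.

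\textbf{Main obstacle.} The hardest step is the sub-case $p/q<r/s$ of the forcing direction: one must exhibit, in the Markov graph of $(f_P,P)$, a loop of length exactly $s$ with exactly $2r$ turning steps, which requires controlling two statistics (period and turning count) simultaneously rather than just one, unlike the loop constructions behind Misiurewicz's proof of the Sharkovsky theorem. I would model this step on the combinatorial techniques already developed in \cite{alm}. Verifying that the over-twist building blocks in the realization force nothing outside the required $\Ovr(\eta)$ is a secondary but non-trivial obstacle, since it requires bounding $\ORP$ from above rather than from below.
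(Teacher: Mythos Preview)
The paper does not prove Theorem~\ref{t:new-order}; it is quoted verbatim as ``Theorem~3.1 of~\cite{bm97}'' and used as a black box throughout Section~\ref{s:main}. So there is no proof in this paper to compare your proposal against.

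That said, two remarks on your sketch relative to the actual argument in~\cite{bm97}. First, your invocation of Lemma~\ref{bsdiv} as ``the bridge'' in the equal-ratio sub-case is off target: that lemma relates block structure to common divisors of an over-rotation pair, whereas what is needed there is a genuine reduction to the Sharkovsky forcing among cycles sharing the same over-rotation number, which in~\cite{bm97} is handled via a first-return construction, not via block-structure considerations. Second, your description of the strict-inequality sub-case (``interpolate to any strictly larger ratio'') correctly identifies the difficulty but does not name the mechanism; in~\cite{bm97} this is done by carefully splicing segments of loops in the Markov graph so as to adjust numerator and denominator independently, and that combinatorial control is the real content of the theorem. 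Your realization sketch via over-twist patterns and limits is broadly the right shape.
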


For some patterns automatically we get cycles of all periods.

\begin{dfn}[Convergent/divergent patterns]\label{d:c-d}
A pattern (cycle) of period $n$ is \emph{convergent} if for the
corresponding permutation $\pi$ there is a number $m<n$ such that
$\pi(i)>i$ for $i\le m$ and $\pi(i)<i$ for $i>m$; otherwise a pattern
(cycle) is \emph{divergent}.
\end{dfn}

Observe that $P$ is a convergent cycle of a $P$-linear map $f$ if and
only if $f$ has only one fixed point.

\begin{lem}\label{divergent}
Any divergent pattern forces a pattern with no block structure of
period $n$ for every $n>1$. Moreover, if $f$ is an interval map with a
periodic orbit of divergent pattern then $\ORP(f)=\Ovr(0)$.
\end{lem}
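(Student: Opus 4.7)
The plan is to derive both assertions from the single claim $\ORP(f) = \Ovr(0)$. Granted this, for every $n \ge 2$ the pair $(1,n)$ lies in $\ORP(f)$, and because $\gcd(1,n)=1$, Lemma~\ref{bsdiv} tells us that any cycle realizing this over-rotation pair has no block structure; such a cycle has period exactly $n$ by definition of the over-rotation pair, so the first assertion follows. For the second assertion, Theorem~\ref{t:new-order} says $\ORP(f) = \Ovr(\eta)$ for some $\eta \in \mathbb M$, and $\Ovr(0)$ is the maximal such set, so it suffices to exhibit cycles in $\ORP(f)$ of over-rotation pair $(1,n)$ for $n$ arbitrarily large.

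By the forcing principle I may assume $f$ is the $P$-linear map of the divergent cycle $P = \{x_1 < \dots < x_N\}$. Divergence of $P$ means that the sign sequence $\varepsilon_i = \mathrm{sgn}(f(x_i)-x_i)$ is not of the form $+\cdots+-\cdots-$, so there is an index $j$ with $\varepsilon_j = -$ and $\varepsilon_{j+1} = +$. Since $f(x_j)$ and $f(x_{j+1})$ lie on the orbit, one has $f(x_j)\le x_{j-1}$ and $f(x_{j+1})\ge x_{j+2}$, and setting $I_j = [x_j,x_{j+1}]$ yields
\[
f(I_j) \supseteq [f(x_j),f(x_{j+1})] \supseteq I_{j-1} \cup I_j \cup I_{j+1}.
\]
Thus in the Markov graph $I_j$ carries a self-loop and covers both neighbors, and $f|_{I_j}$ is an expanding affine bijection onto its image with a repelling fixed point $p \in I_j$.

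The main step is to exploit this horseshoe-like structure to produce, for each $n \ge 2$, a cycle of period $n$ with over-rotation pair $(1,n)$. Such a cycle has exactly two direction reversals ($m=2$) along its orbit: it moves monotonically in one direction for some steps and then monotonically in the other for the rest. I would build it as a loop in the Markov graph of length $n$, obtained by concatenating $n-k$ iterations of the self-loop $I_j \to I_j$ with a short ``trip'' loop $\gamma$ of length $k$ starting and ending at $I_j$ but passing through basic intervals to the right of $I_j$. During the self-loops the orbit stays in $I_j$ on one side of $p$ and moves monotonically (since $f|_{I_j}$ is increasing), contributing no sign change in $f(x)-x$; the trip $\gamma$ should be chosen so as to contribute exactly two sign reversals, giving $m=2$ and over-rotation pair $(1,n)$.

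The main obstacle is the existence and sign-analysis of the trip $\gamma$: one needs a basic interval $I_r$ with $r>j$ such that $f(I_r) \supseteq I_j$ (to close the return edge), and one must verify that the orbit segment corresponding to $\gamma$ realizes exactly two sign reversals. This relies on following the sign sequence of $P$ past $p$ to the next change of $\varepsilon_i$ (which exists since $\varepsilon_N=-$), locating the next fixed point of $f$ on that side, and checking the covering relations on the intermediate basic intervals. Once cycles of over-rotation pair $(1,n)$ are produced for all sufficiently large $n$, Theorem~\ref{t:new-order} forces $\eta=0$, giving $\ORP(f) = \Ovr(0)$ and completing the proof.
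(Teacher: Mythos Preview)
Your reduction of both assertions to the single claim $\ORP(f)=\Ovr(0)$, followed by the observation that $(1,n)$ is coprime so Lemma~\ref{bsdiv} yields a no-block-structure cycle of each period $n\ge 2$, is exactly how the paper proceeds. The only difference is that the paper does not attempt to prove $\ORP(f)=\Ovr(0)$ at all: it simply invokes Lemma~3.2 of~\cite{bm97} and is done in three lines.

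Your attempt to establish that claim directly via the horseshoe at $I_j$ is a legitimate, more self-contained route, but the gap you flag is real and not merely omitted detail. To obtain over-rotation pair exactly $(1,n)$ you would need the trip $\gamma$ to be monotone until its final step and to return to the correct side of the repelling fixed point $p\in I_j$; neither is forced by the covering relations you have recorded, and a generic return loop may oscillate or land on the wrong side of $p$. A cleaner way to finish with the same horseshoe is to drop the demand for the exact pair $(1,n)$: take \emph{any} fixed return loop $\gamma$ of length $k$ from $I_j$ back to itself other than the self-loop (for instance the fundamental loop of $P$), and note that along the concatenated loop $I_j^{\,n-k}\gamma$ the self-loop segment is monotone, since $f|_{I_j}$ is affine with slope exceeding $1$ and the orbit stays on one side of $p$. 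Hence the total number of direction reversals is bounded independently of $n$, the resulting cycle has over-rotation number $O(1/n)$, and Theorem~\ref{t:new-order} then forces $\eta=0$.
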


\begin{proof}
Let $P$ be a cycle of divergent pattern $A$. By Lemma~3.2 of~\cite{bm97},
if $f$ is an interval map with a cycle of divergent pattern
then $\ORP(f)=\Ovr(0)$, which is exactly the second claim of the lemma.
It follows that $f$ has cycles of over-rotation pair $(1, n)$ for
every $n$. By Lemma~\ref{bsdiv} these cycles have no block structure.
Considering a $P$-linear map $f$
we see that $A$ forces patterns with no block structure of any period
$n$ as desired.
\end{proof}

{}From now on we consider only convergent patterns. Then we can use an
alternative way of computing over-rotation pairs. Let $P$ be a cycle of
a convergent pattern $A$ with $\orp(P)=(p, q)$. We will always denote
by $a_P=a$ the fixed point of the $P$-linear map $f$ (we may omit the
subscript $P$ if no ambiguity is possible). Then
$(x-f(x))(f(x)-f^2(x))<0$
if and only if $x$ is mapped to the other side of $a$ under $f$. Thus,
$p$ equals the number of times when a point in $P$ is mapped from the left
of $a$ to the right of $a$ (alternatively, from the right of $a$ to the
left of $a$); $p$ can also be computed if we count the number of times
in $P$ when a points maps from one side of $a$ to the other side of
$a$, and divide this number by $2$. We can think of $p$ as a
cumulative rotation of $P$ about $a$. This interpretation
helps, in particular, in the proof of the next lemma.

\begin{lem}\label{l:rot-no-div}
If $A$ is a convergent pattern, $\rho(A)=1/2$ if and only if
$P$ has division.
\end{lem}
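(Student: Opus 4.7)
The plan is to exploit the alternative computation of $p$ given just before the lemma: for a convergent pattern $A$ with realising cycle $P$ and unique fixed point $a$ of the $P$-linear map $f$, the quantity $p$ equals the number of points of $P$ that $f$ sends from the left of $a$ to the right of $a$ (equivalently, from the right to the left). Let $L$ and $R$ denote the points of $P$ to the left, respectively right, of $a$; then $|L|+|R|=q$ (since $a\notin P$, as $q>1$). Because $f$ permutes $P$, a double count gives $\#(L\to R)=\#(R\to L)=p$, so $p\le\min(|L|,|R|)\le q/2$, with equality throughout if and only if $|L|=|R|=q/2$, every point of $L$ is sent into $R$, and every point of $R$ is sent into $L$. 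Hence $\rho(A)=p/q=1/2$ is equivalent to this ``geometric swap'' condition.

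Writing $P=\{x_1<x_2<\dots<x_q\}$, the set $L$ is automatically the initial segment $\{x_1,\dots,x_{|L|}\}$ and $R$ the complementary final segment. So the geometric swap condition with $|L|=|R|=q/2$ is literally the combinatorial condition of Definition~\ref{d:no-div} with block size $m=q/2$, which settles the forward direction at once. For the converse, assume $\pi$ has division with $q=2m$; then $f(x_i)\ge x_{m+1}>x_i$ for $i\le m$ and $f(x_j)\le x_m<x_j$ for $j>m$. Since $f$ is affine on each $P$-basic interval, the piecewise-linear function $f-\mathrm{id}$ is strictly positive throughout $[x_1,x_m]$, strictly negative throughout $[x_{m+1},x_{2m}]$, and has exactly one zero in $(x_m,x_{m+1})$. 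By convergence of $A$ this zero is the unique fixed point $a$, so $L=\{x_1,\dots,x_m\}$ and $R=\{x_{m+1},\dots,x_{2m}\}$; now division of $\pi$ reads exactly as $\pi(L)=R$ and $\pi(R)=L$, and the reformulation from the first paragraph gives $\rho(A)=1/2$.

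The bulk of the argument is the reformulation in the first paragraph; once it is in hand, both implications are just translations between the combinatorial halving $\{1,\dots,m\}\sqcup\{m+1,\dots,2m\}$ used to define division and the geometric halving $L\sqcup R$ induced by the fixed point. The only delicate point is the need to pin down the location of $a$ in the converse, and convergence of $A$ is precisely what forces $a\in(x_m,x_{m+1})$ so that the two halvings coincide.
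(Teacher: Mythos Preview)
Your proof is correct and follows the same idea as the paper's own argument: both rest on the alternative description of $p$ as the number of points of $P$ that cross the unique fixed point $a$, so that $\rho(A)=1/2$ forces all $2n$ points to swap sides, which is exactly division. The paper's proof is much terser --- it simply asserts ``If $A$ has division then $\rho(A)=1/2$'' and for the other direction observes that $\orp(A)=(n,2n)$ means every point crosses $a$ --- while you have carefully spelled out the inequality $p\le\min(|L|,|R|)\le q/2$ and, in the converse, explicitly located $a$ in $(x_m,x_{m+1})$; these details are implicit in the paper but your version makes them explicit.
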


\begin{proof}
If $A$ has division then $\rho(A)=1/2$. Now, if $\rho(A)=1/2$, then
$\orp(A)=(n, 2n)$ for some $n$. Let $P$ be a cycle of pattern $A$, and let
$f$ be a $P$-linear map. Then $P$ has $2n$ points and all of them are
mapped from one side of $a$ to the other side. Therefore, $P$ has a
division.
\end{proof}

Another concept related to Theorem~\ref{t:new-order} is that of a
\emph{twist pattern}.

\begin{dfn}[Twist patterns]\label{d:twist}
A pattern of over-rotation number $\rho$ is \emph{twist} if
it does not force any other pattern of over-rotation number $\rho$; we
use the same terminology for cycles and permutations.
\end{dfn}

By Lemma~\ref{divergent} a twist cycle must be convergent. In
particular, if $P$ is a twist cycle then the $P$-linear map has a
unique fixed point.

\begin{lem}[\cite{bm97a, bm99}]\label{l:twist}
Let $P$ be a twist cycle $P$ of the $P$-linear map $f$. Then, if
points $u, v\in P$ lie on the same side of $a$, map to the same side
of $a$, and $u$ is farther away from $a$ than $v$, then $f(u)$ is
farther away from $a$ than $f(v)$.
\end{lem}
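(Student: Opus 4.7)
I would argue by contradiction. Suppose $u,v\in P$ lie on the same side of $a$, $f(u),f(v)$ lie on the same side of $a$, $u$ is farther from $a$ than $v$, yet $f(v)$ is farther from $a$ than $f(u)$. Conjugating $f$ by a reflection through $a$ (which preserves the over-rotation pair and hence the twist property), I may assume $a<v<u$ and $a<f(u)<f(v)$, so that on the right of $a$ the radial order is reversed by $f$ at the pair $\{u,v\}$; the case when $f(u),f(v)$ lie to the left of $a$ is analogous.

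My goal is to produce a cycle $Q$ of the $P$-linear map $f$ whose pattern differs from that of $P$ but whose over-rotation pair equals $\orp(P)$, contradicting that $P$ is twist. As a preliminary step I reduce to the case that $v$ and $u$ are adjacent in $P$: among all bad pairs, pick one minimizing the number of points of $P$ strictly between them. If there were a point $w\in P\cap(v,u)$, a case analysis on $f(w)$ (where does it sit with respect to $a$, $f(u)$, $f(v)$?) would produce a bad pair with fewer intermediate points — if $f(w)>a$ it falls into one of the zones $(a,f(u))$, $(f(u),f(v))$, $(f(v),\infty)$, and in each zone one of $(v,w)$ or $(w,u)$ is itself bad; the case $f(w)<a$ is handled by a separate sub-argument using the fact that a $w$ mapping to the opposite side can be skipped. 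With $v,u$ adjacent, $K=[v,u]$ is a single basic interval, and since $f$ is affine on $K$ with $f(v)>f(u)$, $f$ is decreasing on $K$ and $f(K)=[f(u),f(v)]$, an interval lying strictly to the right of $a$ that necessarily contains several basic intervals of $P$.

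This gives $K$ extra out-edges in the Markov graph of $(f,P)$, beyond the single edge used by the loop representing $P$. I splice one of these alternative edges into the $P$-loop at $K$, obtaining a distinct loop whose corresponding cycle $Q$ has $\orp(Q)=\orp(P)$: the reroute exchanges two image points that both lie to the right of $a$, so the total number of side-changes of the orbit, and hence the full over-rotation pair, is unchanged. Since the pattern of $Q$ differs from that of $P$, this contradicts the twist hypothesis, completing the proof.

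The main obstacle is making the splice precise. One has to verify that the modified loop really corresponds to a single cycle (rather than decomposing into several shorter cycles), that the resulting pattern is genuinely different from that of $P$, and that the \emph{pair} $(p,q)$ — not merely the rotation number $p/q$ — is preserved. The reduction to the adjacent case is also delicate, especially in treating intermediate points $w$ whose image lies on the opposite side of $a$; the combinatorics of basic intervals must be tracked carefully throughout.
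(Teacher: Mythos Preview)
The paper does not supply its own proof of this lemma; it is quoted from \cite{bm97a,bm99} without argument. Your overall strategy---assume the radial monotonicity fails at some pair $u,v$ and then exhibit a second cycle $Q$ of $f$ with $\rho(Q)=\rho(P)$ but a different pattern, contradicting the definition of twist---is the natural one and is in the spirit of how such results are established in the cited papers.

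That said, what you have written is, as you yourself acknowledge in the final paragraph, a plan rather than a proof, and the two places you flag as ``delicate'' are genuine gaps rather than routine verifications. First, the reduction to adjacent $v,u$: when an intermediate $w\in(v,u)\cap P$ has $f(w)$ on the \emph{opposite} side of $a$, neither $(v,w)$ nor $(w,u)$ satisfies the ``same side of $a$, images on the same side of $a$'' hypothesis, so your minimality argument produces no smaller bad pair. Saying $w$ ``can be skipped'' is not an argument; this case requires a genuinely different construction. Second, the splice is not actually a construction of a loop. Replacing the arrow $K\to J$ in the fundamental loop by $K\to J'$ leaves you at $J'$ with no prescribed continuation, so you do not have a closed walk. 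If instead you intend to swap the two out-arrows of $K$ coming from the germs at $v$ and at $u$, two problems arise: (i)~there is no guarantee both of those germs lie on the fundamental loop (the pleasant ``each basic interval once'' structure used later in the paper is itself a \emph{consequence} of the lemma you are proving, so it is unavailable here); and (ii)~even when both do, the swap typically factors the walk into two shorter closed loops whose over-rotation pairs sum to $(p,q)$ but need not individually have over-rotation number $p/q$. Until the new loop is specified explicitly and shown to yield a single cycle of the correct over-rotation number and a pattern distinct from that of $P$, the contradiction is not reached.
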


\section{Proof of Main Theorem}\label{s:main}

We start by recalling the definition of a well known \v Stefan pattern.

\begin{dfn}[\v Stefan pattern]\label{d:stefan}
Consider the cyclic permutation $\sigma:\{1, 2, \dots, 2n+1\}\to \{1,
2, \dots, 2n+1\}$ ($n\ge 1$), defined as follows:
\begin{itemize}
\item $\sigma(1)=n+1$;
\item $\sigma(i)=2n+3-i$,\ \ if\ \ $2\le i\le n+1$;
\item $\sigma(i)=2n+2-i$,\ \ if\ \ $n+2\le i\le 2n+2$.
\end{itemize}
Then the pattern of this cyclic permutation is called the \emph{\v
  Stefan pattern}, and any cycle of this pattern is said to be a
\emph{\v Stefan cycle}.
\end{dfn}

The importance of those patterns is due to the following fact.

\begin{thm}[\cite{ste77}]\label{t:stefan}
Any pattern of period $2n+1$ forces the \v Stefan pattern of period
$2n+1$. Moreover, if a continuous interval map $f$ has a cycle of
period $2n+1$ and no cycles of period $2k+1$ with $1\le k<n$, then
every cycle of $f$ of period $2n+1$ is \v Stefan.
\end{thm}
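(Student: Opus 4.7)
The plan is to analyze the Markov graph of the $P$-linear map $f$ associated to a cycle $P$ of period $2n+1$, and to exhibit inside it a very rigid loop of length $2n+1$ whose associated periodic orbit has \v Stefan pattern.

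First I would set up as follows. Write $P=\{p_1<p_2<\dots<p_{2n+1}\}$, let $f$ be the $P$-linear map, and look at the function $x\mapsto f(x)-x$ on $P$: since $f$ permutes $P$, this function changes sign between some $p_k$ and $p_{k+1}$, so the basic interval $I_0=[p_k,p_{k+1}]$ contains a fixed point $a$ of $f$ with $f(p_k)$ and $f(p_{k+1})$ on opposite sides of $a$. Linearity on $I_0$ then immediately gives $f(I_0)\supset I_0$. Moreover, because $P$ is a single orbit of odd cardinality, the left and right sides of $a$ cannot both be $f$-invariant subsets of $P$, so after possibly reflecting the whole picture I may assume $f(p_k)\ge p_{k+1}$, i.e.\ the left endpoint of $I_0$ is pushed across $a$ to the right of $I_0$.

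Next I would carry out the main combinatorial step: inductively construct $2n$ distinct basic intervals $I_0,I_1,\dots,I_{2n-1}$ together with coverings $I_0\to I_1\to\dots\to I_{2n-1}\to I_0$ in the Markov graph. At stage $j$ let $U_j=I_0\cup\dots\cup I_j$ and let $[\ell_j,r_j]$ be its convex hull, so $\ell_j,r_j\in P$. The image $f(I_j)$ is an interval with endpoints in $P$, and the inductive claim to maintain is that $f(I_j)$ sticks out of $[\ell_j,r_j]$ on a specified side (alternating between left and right of $a$ in the zigzag fashion matching Definition~\ref{d:stefan}); I then take $I_{j+1}$ to be the next basic interval adjacent to $U_j$ on that side. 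The base case is exactly the content of the previous paragraph, and the inductive step uses that each $I_j$ for $j\ge 1$ has one endpoint in $P\cap U_{j-1}$ whose $f$-image is on the ``outside,'' while its other endpoint $f$-maps beyond $U_j$ by linearity combined with the fact that the distances $|f(p)-a|$ grow when we pull back along the zigzag. After $2n-1$ new intervals are added, every basic interval of $P$ has been used exactly once, and $f(I_{2n-1})$ has no choice but to cover $I_0$, closing the loop.

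Finally I would read off the conclusion and handle the ``moreover'' clause. The loop $I_0\to I_1\to\dots\to I_{2n-1}\to I_0$, being of length $2n+1$ and simple, produces a periodic orbit of $f$ of period $2n+1$ whose itinerary forces its points to sit relative to $a$ exactly according to the zigzag rule of the permutation $\sigma$ in Definition~\ref{d:stefan}; hence it is a \v Stefan cycle, so $P$ forces the \v Stefan pattern. For the second statement, assume $f$ has no cycle of period $2k+1$ with $1\le k<n$ and let $P$ be any cycle of $f$ of period $2n+1$; run the construction above on $P$. If the pattern of $P$ is not \v Stefan, then the Markov graph of $(f,P)$ has at least one covering arrow in addition to those of the \v Stefan loop, and one checks that any such extra arrow produces a shorter simple loop of odd length between $3$ and $2n-1$, hence a cycle of $f$ of that smaller odd period, contradicting the hypothesis. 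Therefore $P$ itself must be \v Stefan.

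The main obstacle is the inductive step in the middle paragraph: one must verify cleanly that $f(I_j)$ really does protrude from $U_j$ on the correct side, so that the process never gets stuck and yields all $2n-1$ remaining basic intervals in the precise zigzag order. This is a careful bookkeeping argument on the relative positions of $f(p_i)$; the temptation is to wave it away as ``clearly,'' but making the alternation rigorous is the technical heart of Stefan's original argument.
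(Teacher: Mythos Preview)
Note first that the paper does not prove this theorem: it is quoted from~\cite{ste77} and used as a tool. There is thus no in-paper proof to compare against; your sketch is an outline of the classical \v Stefan argument.

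There is, however, a concrete error in it. The loop you write down, $I_0\to I_1\to\dots\to I_{2n-1}\to I_0$, has length $2n$, not $2n+1$: there are $2n$ basic intervals for a cycle of $2n+1$ points, and your loop uses each exactly once, hence $2n$ arrows. A loop of length $2n$ yields a periodic orbit of period dividing $2n$, never $2n+1$. The missing ingredient is the self-arrow $I_0\to I_0$, which you correctly noted in your setup (``$f(I_0)\supset I_0$'') but then did not insert into the loop. The \v Stefan loop is
\[
I_0\to I_0\to I_1\to\dots\to I_{2n-1}\to I_0;
\]
it has length $2n+1$, is \emph{not} simple (it repeats $I_0$), and one must additionally check that it is not a power of a shorter loop, so that the associated orbit really has least period $2n+1$. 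You can see the discrepancy already for $n=2$: with five points the four basic intervals give a $4$-loop $I_0\to I_1\to I_2\to I_3\to I_0$, which produces the period-$4$ orbit that the \v Stefan $5$-pattern forces, not a $5$-orbit.

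Apart from this, your outline follows the classical route, and you correctly flag the inductive ``protrusion'' step as the place where the real work lies. For the ``moreover'' clause, be aware that even the \v Stefan Markov graph already contains arrows beyond the fundamental loop (for instance $I_{2n-1}$ covers several of the $I_j$), so the presence of ``an extra arrow'' does not by itself distinguish non-\v Stefan patterns; the argument there also passes through the same inductive bookkeeping you mention at the end.
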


Now we prove some preliminary results. If $P$ is a cycle of
period $n>1$ then for each point $x\in P$ we consider
\emph{germs} at $x$, i.e., small intervals with $x$ as one of the
endpoints. Each point of $P$ has two germs, except the leftmost and
rightmost points, which have one germ each. There is a natural map
induced on the set of germs by the $P$-linear map $f$, and if we start
at the germ of the leftmost point (or the rightmost point), we get
back exactly after $n$ applications of this map. Each germ is
contained in a $P$-basic interval, so this loop of germs gives us a
loop of $P$-basic intervals. These loops are called the
\emph{fundamental loop of germs} and the \emph{fundamental loop of
intervals}. Both loops correspond to the original periodic orbit $P$
Thus, the fundamental loop of intervals contains both
the leftmost and the rightmost $P$-basic intervals. Observe that, by
Lemma~\ref{l:twist}, if $P$ is a twist cycle then any germ at $x\in P$
that points toward $a$ maps to the germ at $f(x)\in P$ that points
toward $a$ too. Thus, if $P$ is a twist cycle, then the vertices of the
fundamental loop of germs form the set of germs pointing toward $a$.

In what follows we use the following notation. Denote by $I=[b_l, b_r]$
the $P$-basic interval containing the point $a$. Observe that the arrow
$I\to I$ is a part of the Markov graph $G$ of $P$. It follows that $I$
is repeated in the fundamental loop of intervals of $P$ twice while all
other $P$-basic intervals are repeated there once. Consider the set
$P'=P\cup\{a\}$. Though the germs at points of $P$ stay the same
whether we consider $P$ or $P'$, there is a change concerning
$P'$-basic intervals versus $P$-basic intervals: $I$ is now replaced by
two $P'$-basic intervals, $I_l=[b_l, a]$ and $I_r=[a, b_r]$. Notice
that the arrows $I_l\to I_r$ and $I_r\to I_l$ are in the Markov graph
of $P'$. Clearly, a germ at a point of $P'$ is contained in a
well-defined $P'$-basic interval. Hence the fundamental loop of germs
of $P$ gives rise to the \emph{fundamental loop of $P'$-basic
  intervals} and the \emph{fundamental loop of germs of $P'$}. Thus,
we get the following lemma.

\begin{lem}\label{fl}
If $P$ is a twist cycle of period larger than $1$ then the fundamental
loop of intervals of $P'$ passes exactly once through every $P'$-basic
interval.
\end{lem}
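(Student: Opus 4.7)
The plan is a direct counting argument that leverages the observation, made in the paragraph preceding the lemma, that for a twist cycle $P$ the vertices of the fundamental loop of germs are exactly the germs pointing toward $a$ (a consequence of Lemma~\ref{l:twist}). First I would count these germs: each interior point of $P$ has exactly one germ pointing toward $a$, while the leftmost and rightmost points of $P$ have only a single germ each, which must point toward $a$ since the fixed point $a$ lies strictly between $\min P$ and $\max P$ (using $n>1$ together with convergence, which holds for twist cycles by the remark following Definition~\ref{d:twist}). So the number of toward-$a$ germs is exactly $n$, matching the length $n$ of the fundamental loop; hence each such germ is visited exactly once.

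Next I would count $P'$-basic intervals: since $|P'|=n+1$, there are exactly $n$ of them, again matching the length of the loop. Hence it suffices to exhibit, for each $P'$-basic interval, at least one toward-$a$ germ of $P$ that it contains; pigeonhole then forces each to contain exactly one, so that the loop of $P'$-basic intervals visits each exactly once. For a $P'$-basic interval $J$ whose endpoints are both in $P$, the interval $J$ lies entirely on one side of $a$, and the toward-$a$ germ at the endpoint of $J$ farther from $a$ is contained in $J$. For $I_l=[b_l,a]$ the rightward germ at $b_l$ lies in $I_l$ and points toward $a$; symmetrically the leftward germ at $b_r$ lies in $I_r$ and points toward $a$. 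Since $a\notin P$ contributes no germs to the loop, these contributions are all of them, distributed one-to-one.

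Conceptually, splitting $I$ into $I_l$ and $I_r$ by inserting $a$ precisely resolves the two passes of the fundamental loop of $P$-basic intervals through $I$: one pass, carried by the rightward germ at $b_l$, now sits in $I_l$, and the other, carried by the leftward germ at $b_r$, now sits in $I_r$. I do not anticipate a serious obstacle; the main thing to handle carefully is the bookkeeping of which germs lie in $I_l$ versus $I_r$, together with verifying the low-period edge case $n=2$, where the loop has length $2$ and the only two $P'$-basic intervals are $I_l$ and $I_r$ themselves, each visited once.
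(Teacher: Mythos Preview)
Your argument is correct and is essentially the same as the paper's: the lemma is stated without a separate proof, being an immediate consequence of the discussion preceding it, which establishes that for a twist cycle the fundamental loop of germs consists precisely of the $n$ germs pointing toward $a$, and you have simply written out the matching of these germs with the $n$ $P'$-basic intervals explicitly. The only difference is that the paper phrases the conclusion via the intermediate observation that the fundamental loop of $P$-basic intervals visits $I$ twice and every other interval once, whereas you pass directly to $P'$-basic intervals via pigeonhole; these are the same count.
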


Now we investigate twist cycles close to the fixed point.

\begin{lem}\label{closetoa}
If $P$ is a twist cycle of period $n>2$ of a $P$-linear map $f$ then
at least one of the points $b_l,b_r$ is the image of a point of
$P$ that lies on the same side of $a$.
\end{lem}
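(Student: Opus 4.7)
The plan is to argue by contradiction. Suppose that neither $b_l$ nor $b_r$ is the image of a point of $P$ lying on the same side of $a$. Since $P$ is a cycle, each of $b_l, b_r$ has a unique $f$-preimage in $P$; write $c_l = f^{-1}(b_l)$ and $c_r = f^{-1}(b_r)$. The contradiction hypothesis then reads $c_l > a$ and $c_r < a$.

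Set $L = \{x \in P : x < a\}$ and $R = \{x \in P : x > a\}$, so that $b_l = \max L$ and $b_r = \min R$. Any twist cycle is convergent (by the observation after Lemma~\ref{divergent}), and from Definition~\ref{d:c-d} together with the adjacency of $b_l, b_r$ to $a$ one reads off the crossover relations $f(b_l) \in R$ and $f(b_r) \in L$: the two closest-to-$a$ points on each side cannot be mapped back to the same side without creating a second fixed point. Partition $P$ into the four classes $LL, LR, RL, RR$ according to which side of $a$ a point sits on and which side its image sits on. The crossover relations together with the contradiction hypothesis give that $b_r$ and $c_l$ both lie in $RL$, while $b_l$ and $c_r$ both lie in $LR$; in particular, both classes are nonempty.

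Now invoke Lemma~\ref{l:twist} on the class $RL$: the restriction $f|_{RL}$ preserves distance-from-$a$ order. The closest-to-$a$ point of $RL \subseteq R$ is $b_r$ (since $b_r$ is closest in all of $R$), and the closest-to-$a$ point of the image $f(RL) \subseteq L$ is $b_l$ (since $b_l$ is closest in all of $L$). Order preservation thus forces $f(b_r) = b_l$. The symmetric argument applied to $LR$ gives $f(b_l) = b_r$. But then $\{b_l, b_r\}$ is an $f$-orbit of period $2$ contained in $P$, contradicting that $P$ is a cycle of period $n > 2$.

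The only real bookkeeping is setting the stage so that Lemma~\ref{l:twist} can bite: one must verify the crossover $f(b_l) \in R$, $f(b_r) \in L$ (which is where convergence is used) and then recognise $b_r$ and $b_l$ as the innermost elements of $RL$ and of $f(RL)$ respectively. Once those identifications are in place, the twist hypothesis pins down the two images immediately, and the period-$2$ contradiction follows.
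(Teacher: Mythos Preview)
Your proof is correct and rests on the same key tool as the paper, namely Lemma~\ref{l:twist}, applied to the points adjacent to $a$. The paper's execution is a touch more direct: after assuming (without loss of generality) $c_r<b_l$, it simply notes that $c_r$ and $b_l$ both lie to the left of $a$ and both map to the right, with $c_r$ farther from $a$ yet $f(c_r)=b_r<f(b_l)$, an immediate violation of Lemma~\ref{l:twist}. You instead apply the lemma on each side to force $f(b_l)=b_r$ and $f(b_r)=b_l$, and finish with the period-$2$ contradiction; this is equally valid and only marginally longer.
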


\begin{proof}
Suppose that $b_l=f_P(c_l)$, $b_r=f(c_r)$, where
$c_l,c_r\in P$, and $c_r\le b_l<b_r\le c_l$. Since $n>2$,
either $c_r<b_l$ or $b_r<c_l$. We may assume that
$c_r<b_l$. However, then $c_r<b_l<a<b_r=f(c_r)<f(b_l),$
which contradicts Lemma~\ref{l:twist}.
\end{proof}

Twist patterns force other patterns with specific properties.

\begin{proposition}\label{mplus2}
If $P$ is a twist cycle of the $P$-linear map $f$ and $P$ has
over-rotation pair $(k,m)$ and over-rotation number $\rho(P)<\frac12$,
then $f$ has a cycle of over-rotation pair $(k+1,m+2)$, which
is not a doubling.
\end{proposition}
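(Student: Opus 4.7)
The plan is to work in the Markov graph of $P'$ and construct a loop of length $m+2$ containing exactly $2k+2$ arrows that cross $a$; by the interpretation of $p$ as half the number of such crossings, such a loop will yield a cycle of $f$ of over-rotation pair $(k+1,m+2)$. The central device is the short bounce $I_l\to I_r\to I_l$, contributing two extra crossings, to be inserted into the fundamental loop of $P'$ supplied by Lemma~\ref{fl}.

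The first task is to secure both arrows $I_l\to I_r$ and $I_r\to I_l$ in the Markov graph. By Lemma~\ref{closetoa}, we may assume $b_l=f(c_l)$ for some $c_l<b_l$. If $f(b_l)$ were on the left of $a$, then $b_l$ and $c_l$ would both be left-to-left mappers, and Lemma~\ref{l:twist} would force $f(b_l)$ to lie strictly between $b_l$ and $a$, where $P$ has no points---a contradiction. Therefore $f(b_l)\ge b_r$, giving $f(I_l)\supset I_r$. For the reverse arrow, the fixed-point condition $f(a)=a$ with $a\in(b_l,b_r)$, together with $f(b_l)>b_l$ and linearity of $f$ on $[b_l,b_r]$, forces $f(b_r)<b_r$; furthermore $f(b_r)\ne b_l$ (otherwise $b_l$ would have two distinct preimages $c_l$ and $b_r$ in $P$, violating uniqueness of preimages along a cycle), so $f(b_r)<b_l$ and $f(I_r)\supset I_l$.

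Now take the fundamental loop $L=V_0\to V_1\to\dots\to V_{m-1}\to V_0$ of $P'$ and let $s$ be the unique index with $V_s=I_l$. Replace the single arrow $V_s\to V_{s+1}$ by the three-arrow path $V_s\to I_r\to I_l\to V_{s+1}$ to obtain a loop $L'$ of length $m+2$. The inserted sub-path $I_l\to I_r\to I_l$ contributes exactly two extra crossings, so $L'$ has $2k+2$ crossings in total. Since $L'$ visits each of the $m-2\ge 1$ basic intervals other than $I_l$ and $I_r$ exactly once (and $m\ge 3$ because $\rho(P)<1/2$), $L'$ is not a proper repetition of any shorter loop, and so it yields a cycle $Q$ of $f$ of period exactly $m+2$ with over-rotation pair $(k+1,m+2)$.

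It remains to show that $Q$ is not a doubling. Suppose otherwise, and label the points of $Q$ as $z_0,\dots,z_{m+1}$ according to their positions in $L'$, so that $f(z_i)=z_{i+1}$. A doubling partitions $Q$ into $(m+2)/2$ geometric pairs of real-line-consecutive points cyclically permuted by $f$; invariance of this partition under $z_i\mapsto z_{i+1}$ forces every block to have the form $\{z_j,z_{j+(m+2)/2}\}$. The two points of $Q$ lying in $I_l$ are real-line-consecutive (no other $Q$-point sits in $[b_l,a]$) and must therefore constitute one such block; but in $L'$ they occur at indices $s$ and $s+2$, which yields $(m+2)/2\equiv\pm2\pmod{m+2}$ and hence $m=2$, contradicting $m\ge 3$. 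The most delicate step is the second paragraph, where Lemmas~\ref{closetoa} and~\ref{l:twist} must be combined with the fixed-point constraint at $a$ to secure \emph{both} required arrows simultaneously.
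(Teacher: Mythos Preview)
Your construction of the loop $L'$ and the verification that $Q$ has period $m+2$ and over-rotation pair $(k+1,m+2)$ are correct and match the paper's approach exactly (the paper also inserts $I_l\to I_r\to I_l$ into the fundamental loop of $P'$). Your second paragraph, securing the arrows $I_l\to I_r$ and $I_r\to I_l$, is correct but superfluous: these arrows are recorded in the text immediately preceding Lemma~\ref{fl}, so you may simply cite that.

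The genuine gap is in your doubling argument. You correctly observe that in a doubling the blocks have the form $\{z_j,z_{j+h}\}$ with $h=(m+2)/2$, and that the two $I_l$-points $z_s,z_{s+2}$ are spatially consecutive in $Q$. But ``spatially consecutive'' does \emph{not} imply ``form a block'': in a doubling of period $2N$ there are $2N-1$ spatially consecutive pairs, only $N$ of which are blocks. So the sentence ``and must therefore constitute one such block'' is an unjustified leap.

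The paper closes this gap differently: it first establishes the concrete ordering $x<f^3(x)<f(x)<a<f^2(x)$ (using that $a$ is repelling) and shows that the second $I_r$-point lies to the right of $f^2(x)$. This pins down the two spatial neighbours of $f(x)$ as $f^3(x)$ and $f^2(x)$, and then both possible pairings are ruled out because the corresponding preimage or image pair fails to be spatially consecutive. Your index-shift idea can also be salvaged with one extra step: look at $z_{s+1}\in I_r$ and note that its block partner is either one of $z_s,z_{s+2}$ (index difference $\pm1$, forcing $h=1$ or $h=m+1$, hence $m=0$) or the other $I_r$-point; in the latter case the rightmost $I_l$-point has its right spatial neighbour already used up, so its block partner must be the other $I_l$-point, and only then does your conclusion $h\equiv\pm2$ apply. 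Either route works, but as written your proof omits the needed case.
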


\begin{proof}
By Lemma~\ref{closetoa}, we may assume that $b_l=f(c_l)$ for some
$c_l\in P$ with $c_l<b_l$. Let $L$ be the fundamental loop of
intervals of $P'$. By Lemma~\ref{fl} it passes through $I_l$ exactly
once, so we can insert into $L$ the two arrows, $I_l\to I_r\to
I_l$, at that place. Denote by $M$ the loop of length $m+2$ obtained
in this way, and by $Q$ a corresponding periodic orbit of $f$. By the
construction, each $P'$-basic interval contains one element of $Q$,
except $I_l$ and $I_r$, which contain two elements each. This, in
particular, shows that the period of $Q$ is $m+2$.

Let $x\in Q$ be the point that belongs to the $P'$-basic interval
whose left endpoint is $c_l$. Then $f(x)\in I_l$, $f^2(x)\in
I_r$, and $f^3(x)\in I_l$. Since the fixed point $a$ is
repelling (because the interval $[b_l, b_r]$ is mapped linearly
onto a larger interval), we get $x<f^3(x)<f(x)<a<f^2(x)$. The
other point of $Q$ which is in $I_r$, is to the right of $f^2(x)$,
because otherwise its image would be the third point of $Q$ in $I_l$
(and by the construction there are two points of $Q$ in $I_l$ and
two points of $Q$ in $I_r$).

If $Q$ is a doubling, then $f(x)$ is paired with $f^3(x)$ or
$f^2(x)$. The first option is impossible because if it holds then
the pair of points mapped to the pair $\{f^3(x), f(x)\}$ must be
the pair $\{x, f^2(x)\}$ and the points $x$ and $f^2(x)$ are not
consecutive in space. The second option is impossible because then the
image pair $\{f^2(x), f^3(x)\}$ consists of points that are not
consecutive in space. Thus, $Q$ is not a doubling.

Finally, since we added two points that are mapped onto the opposite
side of $a$, and the rest of the points of $Q$ are mapped like the
analogous points of $P$, the over-rotation pair of $Q$ is $(k+1,m+2)$.
\end{proof}

{}From Lemma~\ref{bsdiv} and Proposition~\ref{mplus2} we get the
following lemma.

\begin{lem}\label{mp2}
Any pattern $A$ with $\orp(A)=(ks,ms)$ where $k$ and $m$ are coprime, and
$\rho(A)=k/m<1/2$ forces a pattern of over-rotation
pair $(k+1,m+2)$, which is not a doubling. In particular, a pattern
$A$ of over-rotation pair $(2n-1, 4n)$ forces a pattern of
over-rotation pair $(2n,4n+2)$ which has no block structure.
\end{lem}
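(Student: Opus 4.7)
The plan is to reduce $A$ to a twist pattern of over-rotation pair $(k,m)$ in its forcing closure, apply Proposition~\ref{mplus2} to it, and then, for the ``in particular'' clause, carry out a short divisibility computation with Lemma~\ref{bsdiv}.

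For the reduction I would invoke the rotation theory already recalled in Section~\ref{s:prel}: since $\rho(A)=k/m$ with $k,m$ coprime, Theorem~\ref{t:new-order} together with the standard fact that a twist pattern of rational over-rotation number $k/m$ (in lowest terms) has period exactly $m$ and is forced by every pattern of over-rotation number $k/m$ (as in \cite{bm97a,bm99}) produces a twist pattern $B$ forced by $A$ with $\orp(B)=(k,m)$. If one wants to see on the spot that $(k,m)$ is actually realized in the $A$-linear map $f$, one checks from Definition~\ref{d:ovr} that $(k,m)\in\ORP(f)=\Ovr(\eta)$: in the rational case $\eta=(k/m,n)$ the pair $(k,m)$ appears as the multiple $(jk,jm)$ with $j=1\in\Sh(n)$, and in the remaining cases $(k,m)$ lies in the upper part of $\Ovr(\eta)$ because $k/m>\eta$; one then selects a twist cycle among the $f$-cycles of over-rotation pair $(k,m)$.

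Applying Proposition~\ref{mplus2} to $B$ then produces a pattern $C$ with $\orp(C)=(k+1,m+2)$ that is not a doubling, and by transitivity of forcing, $A$ forces $C$; this is the first assertion. For the ``in particular'' clause I would specialize to $k=2n-1$, $m=4n$: these are coprime (since $2n-1$ is odd), and $\rho(A)=(2n-1)/(4n)<1/2$, so the first part yields a non-doubling $C$ with $\orp(C)=(2n,4n+2)$. Because $\gcd(2n,4n+2)=2$, Lemma~\ref{bsdiv} forces any block size $q>1$ of $C$ to divide $2$, whence $q=2$; but then $C$ would be a doubling, contradicting the first part. Hence $C$ has no block structure.

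The hard part is the first step, namely extracting a twist pattern of over-rotation pair $(k,m)$ from the forcing closure of $A$; once this is in hand, Proposition~\ref{mplus2} directly supplies the non-doubling pattern of over-rotation pair $(k+1,m+2)$, and the upgrade to ``no block structure'' is a one-line $\gcd$ computation through Lemma~\ref{bsdiv}.
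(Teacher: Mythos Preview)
Your proposal is correct and follows essentially the same route as the paper: reduce to a twist pattern of over-rotation pair $(k,m)$ via Theorem~\ref{t:new-order}, apply Proposition~\ref{mplus2} to obtain a non-doubling pattern of over-rotation pair $(k+1,m+2)$, and then use $\gcd(2n,4n+2)=2$ together with Lemma~\ref{bsdiv} for the ``in particular'' clause. Your write-up is more explicit than the paper about why a twist pattern of the coprime pair $(k,m)$ sits in the forcing closure of $A$, but this is just elaboration of the same step, not a different argument.
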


\begin{proof}
Let $A$ be a pattern with $\orp(A)=(ks,ms)$ where $k$ and $m$ are coprime, and
$\rho(A)=k/m<1/2$. By Theorem~\ref{t:new-order} $A$
forces a twist pattern $A'$ of over-rotation pair $(k, m)$.
By Lemma~\ref{mplus2}, $A'$ forces a pattern of over-rotation pair
$(k+1, m+2)$ as desired. Now, let the over-rotation pair of $A$ be
$(2n-1, 4n)$. 
By the above $A$ forces a pattern $B$ of over-rotation pair $(2n,
4n+2)$ that is not a doubling. Let us show that $B$ has no block
structure. Indeed, the only common divisor of $2n$ and $4n+2$ is $2$.
Hence by Lemma~\ref{bsdiv} the only way $B$ can have block structure is
when $B$ is a doubling, a contradiction.
\end{proof}

In what follows we will use the notation below: for every $m>2$ set
\[
\eta(m)=\begin{cases}
(s-1,2s)&\mbox{if\ \ } m=2s,\\
(n,2n+1)&\mbox{if\ \ } m=2n+1,
\end{cases}
\]
In particular, $\eta(4n)=(2n-1, 4n)$ and $\eta(4n+2)=(2n, 4n+2)$.

\begin{proposition}\label{stefan-only}
Let $n\ge 1$; then the following claims hold.

\begin{enumerate}

\item Let $g$ be a continuous interval map. Assume that $g$ has a
    cycle of period $2n+1$ with no block structure, and all cycles
    of $g$ of periods $2k+1$ with $1\le k<n$ have block structure.
    Then all cycles of $g$ of period $2n+1$ are \v Stefan and $g$ has no cycles of periods $2k+1$ with $1\le k<n$.

\item Let $f$ be a continuous interval map. Assume that $f$ has a
cycle of period $4n+2$ and no division, but all cycles of $f$ of period
$4n+2$ have block structure. Then all cycles of $f$ of period
$4n+2$ and no division are doublings of a \v Stefan cycle.

\end{enumerate}

\end{proposition}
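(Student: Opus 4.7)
For Part (1), the plan is to first establish that $g$ has no cycle of period $2k+1$ with $1\le k<n$, and then invoke Theorem~\ref{t:stefan}. Assume for contradiction there is such a cycle and choose $k$ minimal. By the second hypothesis, this cycle has block structure; since $2k+1$ is odd, the block size $q>1$ is odd and the number of blocks $m=(2k+1)/q$ is an odd integer in the range $1<m<2k+1$. A standard fact from combinatorial dynamics (essentially the observation implicit in the proof of Lemma~\ref{bsdiv}, formalized by considering the Markov graph on the convex hulls of blocks; see~\cite{alm}) is that a block-structured pattern forces its factor pattern. Thus $g$ has a cycle of odd period $m=2k'+1$ with $k'<k$, contradicting minimality. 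Combined with the cycle of period $2n+1$ supplied by the first hypothesis, this lets Theorem~\ref{t:stefan} close Part (1).

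For Part (2), let $Q$ be a cycle of $f$ of period $4n+2$ with no division. By Lemma~\ref{l:rot-no-div}, $\rho(Q)<1/2$, so $\orp(Q)=(p,4n+2)$ with $1\le p\le 2n$. \textbf{Step 1: $p=2n$.} If $p\le 2n-1$, a direct computation gives $p/(4n+2)<(2n-1)/(4n)$, so $(p,4n+2)\gtrdot(2n-1,4n)$. Theorem~\ref{t:new-order} then places $(2n-1,4n)\in\ORP(f)$, and Lemma~\ref{mp2} yields a cycle of over-rotation pair $(2n,4n+2)$ with no block structure, contradicting the hypothesis that every period-$(4n+2)$ cycle of $f$ has block structure. \textbf{Step 2: $Q$ is a doubling.} Since $\gcd(2n,4n+2)=2$, Lemma~\ref{bsdiv} forces the blocks in any block decomposition of $Q$ to have size dividing $2$, so $Q$ must be a doubling of a cycle $R$ of period $2n+1$ with $\orp(R)=(n,2n+1)$; a further application of Lemma~\ref{bsdiv}, using $\gcd(n,2n+1)=1$, shows $R$ has no block structure.

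\textbf{Step 3: $R$ is \v Stefan.} This is carried out by applying Part (1) to $f$. The cycle $R$ verifies its first hypothesis; the second hypothesis requires that every cycle of $f$ of period $2k+1$, $k<n$, has block structure. Suppose some such cycle has no block structure. Then Theorem~\ref{t:stefan} forces the \v Stefan pattern of period $2k+1$ into $f$, so $(k,2k+1)\in\ORP(f)$. Since $k\le n-1$ implies $k/(2k+1)<(2n-1)/(4n)$, Theorem~\ref{t:new-order} again yields $(2n-1,4n)\in\ORP(f)$, and Lemma~\ref{mp2} once more produces a period-$(4n+2)$ cycle of $f$ with no block structure, contradicting the hypothesis. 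Thus Part (1) applies, $R$ is \v Stefan, and $Q$ is a doubling of a \v Stefan cycle of period $2n+1$.

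The main obstacle I expect is making the "factor pattern is forced" step in Part (1) fully precise, since the excerpt states it only implicitly (in the proof of Lemma~\ref{bsdiv}); in a polished write-up it could be isolated as a brief Markov-graph lemma on the convex hulls $[B_1],\dots,[B_m]$ of the blocks, or cited explicitly from~\cite{alm}. The rest of the argument is a direct chaining of Theorem~\ref{t:new-order} with Lemma~\ref{mp2}, driven in each case by the same inequality comparing a rotation number close to $1/2$ with $(2n-1)/(4n)$.
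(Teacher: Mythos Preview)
Your proof is correct, and Part~(2) follows essentially the same route as the paper: pin down $\orp(Q)=(2n,4n+2)$ via the inequality against $(2n-1)/(4n)$ and Lemma~\ref{mp2}, use Lemma~\ref{bsdiv} to conclude $Q$ is a doubling, and then rule out a non-\v Stefan base by producing a smaller odd period and repeating the over-rotation argument. (Your invocation of Theorem~\ref{t:stefan} in Step~3 to get $(k,2k+1)\in\ORP(f)$ is an unnecessary detour---any cycle of period $2k+1$ already has over-rotation number at most $k/(2k+1)$---but it is not wrong.)

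Part~(1), however, is argued differently from the paper, and the paper's route is cleaner and sidesteps exactly the obstacle you flag. You run a minimal-counterexample argument that needs the auxiliary fact that a block-structured pattern forces its factor pattern. The paper avoids this entirely by using the other half of Theorem~\ref{t:stefan} together with the one-line observation that \v Stefan patterns have no block structure: if $g$ had a non-\v Stefan cycle of period $2n+1$, or any cycle of period $2k+1$ with $1\le k<n$, then Theorem~\ref{t:stefan} would produce a \v Stefan cycle of some period $2k+1$ with $1\le k<n$, and such a cycle has no block structure, contradicting the hypothesis. So your factor-pattern lemma, while true and standard, is not needed here; the paper's argument is a two-liner once one notes that \v Stefan patterns have no block structure.
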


\begin{proof}
(1) Suppose that $g$ has a cycle of period $2n+1$ which is not \v Stefan.
Then, by Theorem~\ref{t:stefan}, $g$ has a \v Stefan cycle of period
$2k+1$ for some $k$ with $1\le k<n$. By inspection, \v Stefan patterns
have no block structure, so we get a contradiction. Moreover, the same argument
shows that $g$ does not have cycles of periods $2k+1$ with $1\le k<n$ at all.

(2) Let $P$ be a cycle of $f$ of period $4n+2$ and no
division. Then, by Lemma~\ref{l:rot-no-div}, the over-rotation number of $P$ is
less than $\frac12$. It follows that the over-rotation pair of $P$ must be
$(2n, 4n+2)$, as otherwise
it is at most $\frac{2n-1}{4n+2}<\frac{2n-1}{4n}$, so, by
Theorem~\ref{t:new-order} and Lemma~\ref{mp2}, $f$ would have a cycle
of period $4n+2$ with no block structure. Thus, since $P$ has block
structure and the greatest common divisor of $2n$ and $4n+2$ is
$2$, the cycle $P$ is a doubling over a cycle, say, $Q$ of period $2n+1$.
If $Q$ is not \v Stefan, then by~(1) there must exist a cycle of $f$ of period $2n-1$.
Since $\frac{n-1}{2n-1}<\frac{2n-1}{4n}$, it again follows from
Theorem~\ref{t:new-order} and Lemma~\ref{mp2} that $f$ has a cycle
of period $4n+2$ with no block structure, a contradiction. Hence $Q$ is a \v Stefan cycle.
\end{proof}

We are ready to prove our Main Theorem. By Lemma~\ref{l:rot-no-div},
in the proof we can consider only convergent patterns.

\begin{proof}[Proof of Main Theorem]
Recall that because we are excluding the pattern of period 2, each
pattern with no block structure has no division. By
Lemma~\ref{l:rot-no-div}, patterns with no division have over-rotation
numbers less than $1/2$. Each integer larger than 2 is of one of the
three forms: $2n+1$, $4n$, $4n+2$, with $n\ge 1$. The largest possible
over-rotation numbers smaller than $1/2$ for patterns of those periods
are, respectively, $\frac{n}{2n+1}$, $\frac{2n-1}{4n}$,
$\frac{2n}{4n+2}$. Those numbers are ordered as follows:
\[
\dots<\frac{2n-1}{4n}<\frac{2n}{4n+2}=\frac{n}{2n+1}<\frac{2n+1}{4n+4}
<\dots.
\]
Thus, by the definition of the order $\gtrdot$, we get the following
order among over-rotation pairs associated with these over-rotation
numbers:
\[
\dots\gtrdot\eta(4n)\gtrdot\eta(4n+2)\gtrdot\eta(2n+1)\gtrdot
\eta(4n+4)\gtrdot\dots\tag{$**$}
\]

Observe that the over-rotation pairs $(2n-1, 4n)$ and $(n, 2n+1)$ are
coprime; on the other hand, the over-rotation pair $(2n, \break 4n+2)$ is not
coprime as the greatest common divisor of $2n$ and $4n+2$ is $2$.

Let $f$ be a continuous interval map. If all cycles of $f$ have
division then all cycles of $f$ have block structure as division is a
particular case of block structure. This means that case~(1) of Main
Theorem takes place. To proceed with less trivial cases of Main
Theorem, fix two integers, $m>2$ and $s$ such that $m\gg s$.

Consider first the case of cycles with no division. Assume that $f$
has a cycle $P$ of period $m>2$ with no division. This cycle has
over-rotation number less than $1/2$, so by Theorem~\ref{t:new-order}
the map $f$ has a cycle of over-rotation pair $\eta(m)$. If $m\gg s$
then $\eta(m)\gtrdot \eta(s)$, so again by Theorem~\ref{t:new-order},
$f$ has a cycle $Q$ of over-rotation pair $\eta(s)$. Since its
over-rotation number is smaller than $1/2$, $Q$ has no division. In
other words, if $f$ has a cycle of period $m$ with no division and
$m\gg s$ then $f$ must have a cycle of period $s$ with no division.
This proves, for cycles with no division, the first statement of Main
Theorem.

Now, assume that $f$ has a cycle $P$ of period $m>2$ with no block
structure. Then, in particular, $P$ has no division. As before,
this implies that $f$ has a cycle of over-rotation pair $\eta(m)$
and, again, $f$ has some cycles of over-rotation pair $\eta(s)$.
To prove the first statement of Main Theorem for cycles with no block
structure we need to show that a cycle of over-rotation pair
$\eta(s)$, forced by $P$, can be chosen to be with no block structure. By
Lemma~\ref{bsdiv} and by the analysis of over-rotation pairs
$\eta(4n),$ $\eta(4n+2),$ and $\eta(2n+1),$ any cycle $Q$ of
over-rotation pair $\eta(s)$ with $s=4n$ or $s=2n+1$
automatically has no block structure.
If $s=4n+2$, then either $m=4n$ or $m\gg 4n$, so
$f$ must have a cycle of over-rotation pair $\eta(4n)=(2n-1,4n)$. Then by
Lemma~\ref{mp2} the map $f$ must have a cycle $Q$ of
over-rotation pair $(2n, 4n+2)$ and no block structure. This
completes the proof of the first statement of Main Theorem for cycles
with no block structure.

This also proves that $ND(f)$ and $NBS(f)$ are either empty or of the
form $\n_r$, i.e., $ND(f)=\n_{r_{nd}}$ and $NBS(f)=\n_{r_{nbs}}$ for
some numbers $r_{nd}$ and $r_{nbs}$. Consider all the
cases in more detail. Any cycle with no block structure has no
division. Hence in general $ND(f)\supset NBS(f)$, so
$r_{nd}\gg r_{nbs}$ or $r_{nd}=r_{nbs}$. If $r_{nd}=2n+1$ then, as before, $f$ must have a
cycle of over-rotation pair $\eta(2n+1)$ which is coprime. It follows
that this cycle has no block structure and, hence, in this case
$ND(f)=NBS(f)=\n_{2n+1}$. This covers case~(2) of Main Theorem for $r=2n+1$. If
$r_{nd}=4n$, then, again, $f$ must have a cycle of over-rotation pair
$\eta(4n)$ which is coprime, this cycle has no block structure, and
$ND(f)=NBS(f)=\n_{4n}$. This covers case~(2) of Main Theorem for $r=4n$.

Suppose now that $r_{nd}=4n+2$. Then $f$ must have a cycle of
over-rotation pair $\eta(4n+2)$. If $f$ has a cycle of over-rotation
pair $\eta(4n+2)$ with no block structure, then $ND(f)=NBS(f)=
\n_{4n+2}$, which corresponds to case~(2) of Main Theorem for $r=4n+2$. Suppose now
that all cycles of over-rotation pair $\eta(4n+2)$ have block
structure. Then, while $ND(f)=\n_{4n+2}$, the set $NBS(f)$ is strictly
smaller than $ND(f)$. The first statement of Main Theorem implies that
$f$ has a point of period $2n+1$; we may assume that its over-rotation
pair is $\eta(2n+1)$, which is coprime, so the corresponding periodic
orbit has no block structure. We conclude that in this case
$ND(f)=\n_{4n+2}$ while $NBS(f)=\n_{2n+1}$. This covers case~(3) of
Main Theorem.

To prove that all cases (1)-(3) can occur, we first note that a
constant map is an example for case~(1).

To give an example of case~(2) for a given $r\ge 3$, we observe that
there exists a pattern of period $r$ with no block structure, and
there are only finitely many such patterns. Since the forcing relation
is a partial order, there is a pattern $A$ of period $r$ with no block
structure which is minimal, in the sense that it forces no other such
pattern. Let $P$ be a cycle of pattern $A$ of the $P$ linear map $f$.
By the part of the theorem already proven, $\n_r\subset NBS(f)$.

If $f$ has a cycle $Q$ of period $m\ge 3$, $m\notin\n_r$, with no
block structure, then $m\gg r$, so the pattern $B$ of $Q$ forces some
pattern $A'$ of period $r$ with no block structure, and thus, $f$ has
a cycle of pattern $A'$. Since $f$ is $P$-linear, it follows that $A$
forces $A'$, and by minimality of $A$ we get $A'=A$. Hence $A$ forces
$B$ and $B$ forces $A$, a contradiction. This proves that
$NBS(f)=\n_r$.

By the part of the theorem already proven, either~(2) holds or
$r=2n+1$ and $ND(f)=\n_{2r}$. In the latter case, by
Proposition~\ref{stefan-only}, $A$ is a \v Stefan pattern that forces
its own doubling which is impossible. This proves that~(2) holds.

Finally, to give an example of case~(3) for a given $n\ge 1$, we take
a $P$-linear map $f$, where the pattern $A$ of $P$ is a doubling of
the \v Stefan pattern of period $2n+1$. By the theorems on forcing
extensions of patterns (see~\cite{alm}), if $B$ is a pattern and $A$
is a doubling of $B$, then $A$ forces $B$ and the only pattern forced
by $A$ but not by $B$ is $A$ itself. Since the \v Stefan pattern does
not force any other pattern of the same period,~(3) holds.
\end{proof}

\begin{remark}\label{so}
It follows from Main Theorem and Proposition~\ref{stefan-only} that
the case~(3) of Main Theorem occurs if and only if a cycle of period
$4n+2$ with no division exists and all such cycles are doublings of \v
Stefan cycles.
\end{remark}

\end{document}